\numberwithin{equation}{section}
\newtheorem{thm}{Theorem}[section]
\newtheorem{theorem}{Theorem}[section]
\newtheorem{cor}{Corollary}[section]
\newtheorem{conj}{Conjecture}[section]
\newtheorem{prop}{Proposition}[section]
\newtheorem{lem}{Lemma}[section]
\newtheorem{lemma}{Lemma}[section]
\newtheorem*{thmintro1}{\Cref{lem:attaching}}
\newtheorem*{thmintro2}{\Cref{prop:6}}
\newtheorem*{thmintro3}{\Cref{thm:4}}
\newtheorem*{thmintro4}{\Cref{thm:3}}
\theoremstyle{definition}
\newtheorem{definition}{Definition}[section]
\newtheorem{notation}{Notation}[section]
\newtheorem{ex}{Example}[section]
\newtheorem{remark}{Remark}[section]
\let\c@equation=\c@thm
\let\c@lem=\c@thm
\let\c@cor=\c@thm
\let\c@conj=\c@thm
\let\c@prop=\c@thm
\let\c@lem=\c@thm
\let\c@defn=\c@thm
\let\c@notation=\c@thm
\let\c@note=\c@thm
\let\c@exmp=\c@thm
\let\c@ex=\c@thm
\let\c@exmps=\c@thm
\let\c@rem=\c@thm
\let\c@warn=\c@thm
\let\c@claim=\c@thm
\let\c@convention=\c@thm
\let\c@conventions=\c@thm
\let\c@quest=\c@thm
\let\c@thmintro=\c@thm
\let\c@conjintro=\c@thm
\let\c@thmbig=\c@thm
\let\c@conbig=\c@thm
\let\c@facts=\c@thm
\let\c@definition=\c@thm
\let\c@proposition=\c@thm
\let\c@lemma=\c@thm
\let\c@remark=\c@thm
\let\c@conjecture=\c@thm
\let\c@theorem=\c@thm
\let\c@prediction=\c@thm
\let\c@guess=\c@thm
\newcommand{\id}{\mathrm{id}}
\DeclareMathOperator{\Aut}{Aut}
\DeclareMathOperator{\Hom}{Hom}
\DeclareMathOperator{\Iso}{Iso}
\newcommand{\Z}{\mathbb{Z}}
\newcommand{\Q}{\mathbb{Q}}
\newcommand{\R}{\mathbb{R}}
\newcommand{\cF}{\mathcal{F}}
\title{Homotopy Representations and the Picard Group of the Equivariant Stable Homotopy Category}
\author{Erik Knutsen}
\thanks{This material is based upon work supported by the National Science Foundation under Grant No. DMS 2143811.}
\begin{document}
\maketitle
\begin{abstract}
If $G$ is a finite group or a torus, it is known that there is an isomorphism between the Grothendieck group of homotopy representations  and that of generalized homotopy representations for $G$. We prove that there is such an isomorphism when $G$ is a compact Lie group with component group $\Gamma$ having the property that all projective $\Z\Gamma$-modules are stably free. This resolves a conjecture of Fausk, Lewis, and May for such $G$, giving a better description of the Picard group of the homotopy category of $G$-spectra.
\end{abstract}
\tableofcontents
\section{Introduction}
Let $G$ be a compact Lie group. Equivalence classes of invertible objects in the stable homotopy category of $G$-spectra form a group, $Pic(hSp^G)$, called the Picard group. Let $X$ be a generalized homotopy representation (see \cref{def:1}) and let $V$ be a finite-dimensional real orthogonal representation of $G$. A \emph{stable homotopy representation} is a $G$-spectrum of the form $\Sigma^{-V}\Sigma^{\infty}X$. Fausk, Lewis, and May showed that up to equivalence the invertible $G$-spectra are precisely the stable homotopy representations \cite{Fausk2001}. This was central to their construction of an exact sequence 
\begin{equation}\label{eqn:1}
    0\xrightarrow{} Pic(A(G))\xrightarrow{} Pic(hSp^G)\xrightarrow{} C(G),
\end{equation} 
where $C(G)$ is the group of continuous functions from the space of subgroups of $G$ to $\Z$ and $A(G)$ is the Burnside ring of $G$ with its Picard group is defined as the group of finitely generated, rank 1, projective $A(G)$-modules. Building on work of tom Dieck and Petrie, they further demonstrated that $Pic(hSp^G)$ is isomorphic to the Grothendieck group of generalized homotopy representations $V'(G)$. In contrast, the Grothendieck group for homotopy representations, $V(G)$, fits into a short exact sequence 
\begin{equation}\label{eqn:2}
    0\xrightarrow{} Pic(A(G))\xrightarrow{} V(G)\xrightarrow{} D(G)\xrightarrow{} 0,
\end{equation} where $D(G)$ is a particular subgroup of $C(G)$ described in \cite[Definition 2.1]{Bauer1989}.

The two exact sequences above naturally give rise to the following.
\begin{conj}[{\cite[Conjecture 4.5]{Fausk2001}}]\label{conj:1}
The canonical map $V(G)\xrightarrow{} V'(G)$ is an isomorphism for any compact Lie group $G$.
\end{conj}
When \cref{conj:1} holds, the short exact sequence \eqref{eqn:2} is a
refinement of the exact sequence \eqref{eqn:1}. It is already known that \cref{conj:1} holds for finite groups \cite{Dieck1982} and tori \cite{Dieck1982a}. The goal of our paper is to prove \cref{conj:1} in the case where $G$ is a compact Lie group such that its component group $\Gamma=G/G_0$ has the property that all projective $\Z\Gamma$-modules are stably free. If $G$ has this property, we will say that it is a \emph{stably free} compact Lie group. Then our primary result is 
\begin{thmintro4}
    Let $G$ be finite or stably free. 
    The homomorphism \[\rho:V(G)\xrightarrow{} V'(G)\] is an isomorphism.
\end{thmintro4}

Now we discuss further the content of the paper. In \cref{spaces}, we recall facts about $G$-spaces that are necessary for the definition of homotopy representations. Of particular importance are facts about isotropy subgroups of $G$ for a generalized homotopy representation $X$ since these determine the fixed points of $X$. In \cref{h.r.}, we define homotopy representations and two invariants: the dimension function and the degree function. These two invariants determine the $G$-homotopy classes of (generalized) homotopy representations. 

In \cref{attachinglemma}, we generalize a method due to tom Dieck and Petrie for attaching cells to certain $G$-CW-complexes to get  homotopy representations. In particular, we prove the following lemma. 
\begin{thmintro1}
Let $G$ be a compact $k$-dimensional Lie group that is finite or stably free. Let $Y$ be a generalized homotopy representation for $G$. Let $n(H)=n(Y)(H)$ be the dimension function for $Y$, with $n=n(\{e\})$.
Let $A$ be a finite $G$-CW-complex, and $f:A\xrightarrow{} Y$ be a $G$-map. Assume that the following hold:
\begin{enumerate}
    \item $n\ge k+3$.
    \item for $H\neq \{e\}$, \[A^H\simeq S^{n(H)}\] and \[\dim(A^H)=n(H),\]
    \item for each prime $p$, if $H$ is a nontrivial $p$-toral subgroup of $G$, $d(f)(H)$ is prime to $p$,
    \item $\dim A\le n-k-1$.
\end{enumerate}
Then there exists a homotopy representation $X$ obtained from $A$ by attaching cells of type $G_+\wedge D^i, i\le n-k$, and a $G$-map $F:X\xrightarrow{} Y$ extending $f$. If $G$ is finite, then $d(F)(\{e\})$ is prime to $|G|$, otherwise $d(F)(\{e\})=\pm 1$.
\end{thmintro1} 
\cref{lem:attaching} is pivotal in showing that, under stable homotopy equivalence, homotopy representations and generalized homotopy representations are indistinguishable.

In \cref{GHR}, we prove
\begin{thmintro2}
Let $G$ be a compact Lie group that is finite or stably free. Let $\Iso(Y)$ denote the set of isotropy subgroups of $Y$ and $w_H$ denote the dimension of $WH=NH/H$ for each closed subgroup $H\subseteq G$. Let $Y$ be a generalized homotopy representation such that: 
\begin{enumerate}[{(a)}]
\item for all $H\subseteq G$ closed, $n(H)\ge w_H+ 3$,
\item if $H\in \Iso(Y)$, then for all $K\supsetneq H$, \[n(H)\ge n(K)+w_H+1,\]
\item $\Iso(Y)$ is closed under intersections.
\end{enumerate}
Then there exists a homotopy representation $X$ and a $G$-homotopy equivalence $f\colon X \xrightarrow{} Y$.
\end{thmintro2}
This gives conditions under which a generalized homotopy representation $Y$ is $G$-homotopy equivalent to a homotopy representation by constructing a homotopy representation $X$ and a $G$-homotopy equivalence $f:X\xrightarrow{} Y$.

In \cref{Grothendieck}, we prove that, stably, every $G$-homotopy class of generalized homotopy representations contains a homotopy representation. 
\begin{thmintro3}
    Let $G$ be finite or stably free. Given a generalized homotopy representation $Y$ and a representation $V$ as in \cref{prop:8}, there exists a homotopy representation $X$ such that \[X\simeq_GY\wedge S^V.\]
\end{thmintro3}
Then we go on to use \cref{thm:4} to prove \cref{thm:3}, which resolves \cref{conj:1} for any stably free compact Lie group $G$.
In particular, \cref{conj:1} is true for connected compact Lie groups and for compact Lie groups such that the component group is cyclic of  order  $p$ for $p$ in the set \[\Lambda=\{2,3,5,7,11,13,17,19\}.\] This accounts for most of the classical examples of compact Lie groups.

\subsection*{Acknowledgements}
I would like to thank my advisor, Agn\`es Beaudry, for all of her support. This paper leans heavily on the work of tom Dieck, Petrie, and Bauer. Special thanks to Bauer and May for useful email exchanges.

\section{Background on Homotopy Representations}

\subsection{$G$-spaces}\label{spaces} 
We begin by gathering standard facts concerning $G$-spaces. We assume that our spaces $X$ are compactly generated and weak Hausdorff, that $G$ is a compact Lie group, and that the action of $G$ on $X$ is continuous.

\begin{definition}
Let $X$ be a based $G$-space. For $x\in X$, let \[Gx=\{gx\mid g\in G\}\] be the \emph{orbit of $x$}. Call \[X/G=\{Gx\mid x\in X\}\] the \emph{orbit space of $X$} where the topology on $X/G$ is the quotient topology. The \emph{orbit type} of $X$ is the set of isomorphism classes of orbits in $X/G$ as left $G$-sets. If this set is finite, then we say $X$ has \emph{finite orbit type.} 
\end{definition}

We restrict our attention to those subgroups that are also closed subspaces. Many of the properties that are relevant to us are invariant under conjugacy.

\begin{definition}
For a closed subgroup $H\subseteq G$, denote the conjugacy class of $H$ by $(H)$. Let \[\Psi(G)=\{(H)\mid H\text{ is a closed subgroup of }G\}.\] Let $NH$ be the normalizer of $H$ in $G$, and define $WH=NH/H$.
\end{definition}

Given two based $G$-spaces $X$ and $Y$, the smash product $X\wedge Y$ is a based $G$-space with $G$ acting diagonally.
The fixed points of a based $G$-space associated to a closed subgroup $H\subseteq G$ are defined by \[X^H=\{x\in X\mid hx=x, \text{ for all }h\in H\}.\] We denote the dimension of the fixed point space by $\dim(X^H)$.

Let $X$ and $Y$ be based $G$-spaces. 
We list the properties for fixed points that we need.

\begin{itemize}
        \item For conjugate subgroups $H$ and $K$, $X^H$ is homeomorphic to $X^K$.
        \item If $H\subseteq K$, then $X^K\subseteq X^H$.
        \item The fixed points of the smash product satisfy $(X\wedge Y)^H=X^H\wedge Y^H$.
        \item The fixed point subspace $X^H$ is a $WH$-space.
        \item For any $K/H\in WH$, $(X^H)^{K/H}=X^K$.
\end{itemize}
    
\begin{definition}
The \textit{isotropy group} of an element $x\in X$ is the set \[G_x=\{g\in G\mid gx=x\}.\] We denote the set of isotropy groups of a $G$-space $X$ by $\Iso(X)$. The \emph{isotropy type} of $X$ is the set of conjugacy classes of isotropy groups.
\end{definition}

Since the $G$-action is continuous and one point sets are closed in $X$, isotropy subgroups are closed.

Observe that for any $x\in X$, if every $h\in H$ fixes $x$, then $H\subseteq G_x$.  
Since $X$ is a based $G$-space, $G\in\Iso(X)$. Therefore there exist isotropy subgroups $\hat{H}\in \Iso(X)$ such that $H\subseteq \hat{H}$. 
It then follows that for any such $\hat{H}$, \[X^{\hat{H}}\subseteq X^H.\] 

If $H=g^{-1}Kg$ and $H=G_x$ then $K=G_{gx}$. So $\Iso(X)$ consists of complete conjugacy classes. 

Given two $G$-spaces $X$ and $Y$ the isotropy groups of $X\wedge Y$ relate to those of $X$ and $Y$ in the following way: \[\Iso(X\wedge Y)=\{G\} \cup\{H\cap K\mid H\in \Iso(X\backslash \{*\}), K\in \Iso(Y\backslash \{*\})\}.\]

If a based $G$-space $X$ has finite orbit type, then it has finite isotropy type. The relationship between isotropy type and orbit type is useful because our primary objects of interest have finite orbit type. The finiteness of the isotropy type is then fundamental to the proof techniques we will apply later. 
The next two propositions show that if the isotropy set of a $G$-space is closed under intersections, then the fixed points are completely determined by the isotropy groups.

\begin{prop}\label{prop:1}
Let $H\subseteq G$ and $X$ be a based $G$-space.
Suppose $\Iso(X)$ is closed under intersections. Then there is a unique minimal isotropy group $m(H)$ containing $H$. Furthermore, $X^H=X^{m(H)}$.
\end{prop}
\begin{proof}
    Let $m(H)$ be the intersection of all isotropy groups that contain $H$. Then $m(H)$ is minimal and unique. Since $H\subseteq m(H)$, $X^{m(H)}\subseteq X^H$. Suppose $x\in X^H.$ Since $m(H)\subseteq G_x$, it follows that $x\in X^{m(H)}$.
\end{proof}

\begin{prop}
    Let $X$ be a based $G$-space 
    with $\Iso(X)$ closed under intersections. Let $H,K\subseteq G$. If $K\in (H)$, then $m(K)\in (m(H))$.
\end{prop}
\begin{proof}
    Let $g\in G$ be such that $g^{-1}Kg=H$. Observe that \[m(H)\subseteq g^{-1}m(K)g\] because \[H\subseteq g^{-1}m(K)g\text{ and }g^{-1}m(K)g\in \Iso(X).\] Similarly, $m(K)\subseteq gm(H)g^{-1}$. So $m(K)\in (m(H))$.
\end{proof}

\subsection{Homotopy representations}\label{h.r.}
In this section we review generalized homotopy representations and the invariants that determine their $G$-homotopy types.

\begin{definition}\label{def:1}
A \emph{homotopy representation} for $G$ is a finite-dimensional based $G$-CW-complex $X$ of finite orbit type such that
\begin{enumerate}[(i)]
    \item for all closed subgroups $H\subseteq G$, there exists $n(X)(H)\in\Z_{\ge 0}$ such that \[X^H\simeq S^{n(X)(H)},\]
    \item $X^H$ is $n(X)(H)$-dimensional as a space.
\end{enumerate}
If we drop ii), then $X$ is a \emph{generalized homotopy representation}. If $X$ is a finite based $G$-CW-complex, then we call $X$ a \emph{finite homotopy representation}.
\end{definition}

\begin{remark}
    \cref{conj:1} is a statement concerning $G$-homotopy equivalence classes of (generalized) homotopy representations. Thus at times we may work with any $G$-space that has the $G$-homotopy type of a $G$-CW-complex. 
    This is consistent with how tom Dieck and Petrie originally defined homotopy representations in \cite{Dieck1982}. 
\end{remark}

Homotopy representations are a restricted class of generalized homotopy representations. As the more easily understood objects, it would be nice to be able to restrict our attention to these. To do so we want to be able to say when a $G$-homotopy class of generalized homotopy representations contains a representative that is a homotopy representation.
The two invariants, dimension and degree, allow us to determine the $G$-homotopy classes of generalized homotopy representations. 

The homotopy dimension of the fixed points associated to each subgroup is the first invariant and gives rise to a function defined on conjugacy classes of subgroups of $G$.

\begin{definition}
Given a generalized homotopy representation $X$, \emph{the homotopy dimension of $X$} is the function \[n(X):\Psi(G)\xrightarrow{}\Z_{\ge 0}\] for $n(X)(H)$ such that $X^H\simeq S^{n(X)(H)}$.
\end{definition}

It is important not to confuse this with the dimension of the underlying fixed point space $X^H$ which, as stated earlier, we denote by $\dim(X^H)$.

\begin{notation}
When the space $X$
in question is unambiguous, we will write:
\begin{itemize}
    \item $n(H)$ in place of $n(X)(H)$, and
    \item $n$ in place of $n(X)(\{e\})$.
\end{itemize}
\end{notation}

If $G$ is a nilpotent finite group or $G$ is a compact Lie group such that its connected component is abelian and its component group is nilpotent, the homotopy dimension function is sufficient to classify the $G$-homotopy types of generalized homotopy representations (see \cite{Bauer1988}). More generally,
in order to detect different $G$-homotopy types 
we also need the degree function.

In order to situate the definition of the degree in its proper context, we need to describe orientations. This is a technical necessity for generalized homotopy representations since the nontrivial cohomology groups associated to fixed point spaces are always $\Z$, and one needs to consider coherent choices of generators.

Let $X$ be a based $G$-CW-complex of finite orbit type, and let $CX$ denote the cone on $X$. Assume that for each closed subgroup $H\subseteq G$, \[H^{n(H)+1}(CX^H,X^H;\Z)=\Z,\] 
for some $n(H)\in\Z_{\ge0}$. For example, $n(H)$ could be the dimension of $X^H$ as a space or the homotopy dimension of a generalized homotopy representation.
The action of $WH$ on the pair $(CX^H,X^H)$ induces an action on this cohomology group and thus a homomorphism \[e_{X,H}:WH\xrightarrow{} \Aut(\Z).\]

\begin{definition}[{\cite[p. 69]{Dieck1987}}]\label{def:orientation}
 The homomorphism $e_{X,H}$ is called \emph{the orientation behaviour of $X$ at $H$.} An \emph{orientation} of $X$ is a choice of generator \[z_X(H)\in H^{n(H)+1}(CX^H,X^H;\Z)\] for each $H$. 
\end{definition}

Assume that for \[i:X^{m(H)}\xrightarrow{} X^H,\] $i^*(z_X(m(H))=z_X(H).$ Let \[l_g:X^H\xrightarrow{} X^K\] be left translation by $g$ where $K=gHg^{-1}$. We have \[l^*_gz_X(K)=e(g,H,X)z_X(H),\ e(g,H,X)\in\Z\backslash\{0\}.\]

\begin{definition}[{\cite[II.10.15]{Dieck1987}}]
    Let $X$ and $Y$ be generalized homotopy representations with the same dimension functions.  Orientations $z_X$ and $z_Y$ of $X$ and $Y$ are \emph{coherent} if $e(g,H,X)=e(g,H,Y)$ for all $H\subseteq G$.
\end{definition}

The following proposition states that we can suppress orientations on generalized homotopy representations because they are determined by the dimension function. Note that while tom Dieck worked with unbased spaces the proof of \cref{prop:5} depends on orientation and homotopy dimension which are identical in the based and unbased cases.

\begin{prop}[{\cite[II.10.16]{Dieck1987}}]\label{prop:5}
Suppose $X$ and $Y$ are generalized homotopy representations for $G$ with $n(X)=n(Y).$ Then
\begin{enumerate}
    \item $X$ and $Y$ have the same orientation behaviour,
    \item $X$ and $Y$ possess coherent orientations.
\end{enumerate}
\end{prop}

Let $f:X\xrightarrow{} Y$ be an equivariant map between oriented generalized homotopy representations for $G$ with $n(X)=n(Y).$ For every $H\subseteq G$, \[X^H\simeq Y^H\simeq S^{n(H)}.\] So the nonequivariant map $f^H$ has a  degree, $\deg(f^H)$, defined by \[z_X(H)=\deg(f^{H}) z_{Y}(H).\] 

\begin{definition}
Define \emph{the degree of the equivariant map $f$} to be a map \[d(f):\Psi(G)\xrightarrow{} \Z,\] such that \[d(f)(H)=\deg(f^H).\]
\end{definition}  

The dimension and degree functions determine the $G$-homotopy classes of (generalized) homotopy representations in the following way. 
\begin{prop}
   Let $G$ be a compact Lie group. Given two (generalized) homotopy representations, $X$ and $Y$, $X$ is $G$-homotopy equivalent to $Y$ if and only if $n(X)=n(Y)$ and there exists a map $f:X\xrightarrow{} Y$ such that $d(f)(H)=\pm 1$ for all $H\subseteq G$. 
\end{prop}
\begin{proof}
    A map $f:X\xrightarrow{} Y$ is a $G$-homotopy equivalence if and only if \[f^H:X^H\xrightarrow{} Y^H\] is a homotopy equivalence for all $H\subseteq G$. Observe that if $f^H$ is a homotopy equivalence then $n(X)(H)=n(Y)(H)$ and $d(f)(H)=\pm 1$. 
    
    Conversely, suppose $n(X)(H)=n(Y)(H)$ and $d(f)(H)=\pm 1$. Then \[\pi_{n(H)}(X^H)\cong \tilde{H}^{n(H)}(X^H)\text{ and }\pi_{n(H)}(Y^H)\cong \tilde{H}^{n(H)}(Y^H).\] Since $d(f)(H)=\pm 1$, the map \[f_*:\pi_{n(H)}(X^H)\xrightarrow{} \pi_{n(H)}(Y^H)\] is an isomorphism and $f^H$ is a homotopy equivalence.
    
\end{proof}

We want to determine which $G$-homotopy classes of generalized homotopy representations contain homotopy representations. We will first show under what conditions we can attach cells to a $G$-CW-complex in order to construct a homotopy representation $X$ that has the dimension function of a generalized homotopy representation $Y$ and a map $f:X\xrightarrow{} Y$. The resulting function may not be a homotopy equivalence, but it will be malleable enough that we can further alter $X$ to get a homotopy representation that is $G$-homotopy equivalent to $Y$.

\section{A Lemma for Attaching Cells}\label{attachinglemma}
Given a generalized homotopy representation $Y$, we want to be able to say when we can construct a homotopy representation $X$ with a $G$-map to $Y$. The capstone of this section is \cref{lem:attaching} which allows us to construct $X$ from an appropriate $G$-CW-complex by attaching cells provided that the component group of $G$ is suitable.

Let $G$ be a compact Lie group, $G_0$ its identity component, and let $\Gamma$ denote the component group $G/G_0$ of $G$. 

\begin{definition}\label{def:free}
Let $G$ be a compact $k$-dimensional Lie group. Let $X$ be a finite $G$-CW-complex. We say that $\tilde{H}_*(X)$ is $H_*(G)$\textit{-free (stably free, projective)} if all of the following hold:
\begin{enumerate}
    \item for some $n\ge 0$, $\tilde{H}_i(X)=0$ if $i<n$ or $i>n+k$,
    \item the map induced by the $G_0$ action on $X$, \[\eta:H_*(G_0)\otimes H_n(X)\xrightarrow{} \tilde{H}_*(X)\] is an isomorphism, and
    \item $H_n(X)$ is a free (stably free, projective) $\Z\Gamma$-module.
\end{enumerate}
\end{definition}

We want to verify that being $H_*(G)$-free according to the above definition does imply that $\tilde{H}_*(X)$ decomposes into a finite direct sum of copies of $H_*(G)$.

\begin{lem}\label{lem:1}
Let $G$ be a compact $k$-dimensional Lie group. Let $X$ be a finite based $G$-CW-complex such that $\tilde{H}_*(X)$ is a free $H_*(G)$-module. Then for some finite index set $I$, \[\tilde{H}_*(X)\cong \bigoplus_{I} H_*(G).\]
\end{lem}

\begin{proof}
    Since $X$ has finitely many cells and $G$ is a compact manifold, the homology $H_*(X)$ is finitely generated over $\Z$, hence finitely generated over $H_*(G)$. 
    By the K\"unneth isomorphism, 
    \[\mu:H_*(G_0)\otimes H_*(\Gamma)\xrightarrow{\cong} H_*(G) .\]
    The $\Gamma$ action induces an isomorphism of $\Gamma$-modules,
    \[H_0(G)\cong \Z\Gamma,\] and so an action of $\Z\Gamma$ on $H_n(X)$, where $n$ is as in (1) of \cref{def:free}.
    
    Let \[\{e_i:i\in I\}\] be a basis for the free $\Z\Gamma$-module $H_n(X)$. Then \[\{\eta(1\otimes e_i):i\in I\}\] 
    is an $H_*(G)$-basis for $\tilde{H}_*(X)$. 
\end{proof}

The following theorem of Oliver's, which we reprint, will be used in \cref{lem:attaching} to show when a $G$-CW-complex $X$ has $H_*(G)$-projective homology. We note that Oliver's proof covers both the based and unbased cases. A compact Lie group $H$ is called $p$-toral if $H_0$ is abelian and $H/H_0$ is a $p$-group.

\begin{theorem}[{\cite[Theorem 1]{Oliver1976}}]\label{thm:1} Let $G$ be a $k$-dimensional group. Assume $X$ is a finite $G$-CW-complex such that, for every prime $p$, if $H$ is a nontrivial $p$-toral subgroup of $G$, then $X^H$ is $\Z/p$-acyclic. If there exists $n\ge 0$ such that $\tilde{H}_*(X)=0$ except in dimensions $n$ to $n+k$ and $H_{n+k}(X)$ is $\Z$-free, then $\tilde{H}_*(X)$ is $H_*(G)$-projective.
\end{theorem}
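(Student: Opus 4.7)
The plan is to verify the three conditions of \cref{def:free} for $\tilde H_*(X)$ viewed as a graded $H_*(G)$-module. Condition (1) holds by hypothesis. For (2) and (3), I would separate the analysis into the connected and discrete parts of $G$: first show that the $G_0$-action splits $\tilde H_*(X)$ as a tensor product with $H_*(G_0)$ concentrated in bottom degree $n$, and then analyze the residual $\Gamma$-action on $H_n(X)$ using the $p$-toral hypothesis.

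For condition (2), I would use the Borel construction $EG_0\times_{G_0}X$ and its Serre spectral sequence
\[ E_2^{p,q}=H_p(BG_0;H_q(X))\Rightarrow H_{p+q}(EG_0\times_{G_0}X). \]
The local coefficients are trivial because $G_0$ is connected. Combining the vanishing range $\tilde H_q(X)=0$ for $q<n$ or $q>n+k$ with the $\Z$-freeness of $H_{n+k}(X)$ and the cohomological dimension of $BG_0$, one expects the spectral sequence to collapse and exhibit an isomorphism
\[ H_*(G_0)\otimes H_n(X)\xrightarrow{\cong}\tilde H_*(X) \]
induced by the action map $\eta$. This verifies (2) and reduces (3) to showing that $H_n(X)$ is projective over $\Z\Gamma$.

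For condition (3), I would invoke Swan's criterion: a finitely generated $\Z\Gamma$-module is projective iff its $p$-localization is $\Z_{(p)}P$-projective for every prime $p$ and every Sylow $p$-subgroup $P\le \Gamma$. Given such a $P$, let $\tilde P\subseteq G$ denote its preimage, which is a $p$-toral subgroup. If $P$ is nontrivial, so is $\tilde P$, and the hypothesis gives the $\Z/p$-acyclicity of $X^{\tilde P}$. A Smith-theoretic comparison between the mod $p$ chains of $X$ and of $X^{\tilde P}$ should then force the mod $p$ chain complex of $X$ to be chain equivalent to a complex of free $(\Z/p)P$-modules; a standard lifting lemma (a $\Z_{(p)}P$-module whose underlying abelian group is $\Z$-free is projective iff its mod $p$ reduction is $(\Z/p)P$-free) then promotes this to $\Z_{(p)}P$-projectivity of $H_n(X)\otimes\Z_{(p)}$.

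The main obstacle is the Smith-theoretic step: classical Smith theory covers finite $p$-groups, but $\tilde P$ is $p$-toral with a positive-dimensional toral part $\tilde P_0$. The natural workaround is to apply Borel's localization theorem to the $\tilde P_0$-action, reducing the computation to the finite $p$-group $\tilde P/\tilde P_0=P$ acting on $X^{\tilde P_0}$, at which point classical Smith theory applies and feeds back into the Swan-criterion machinery above.
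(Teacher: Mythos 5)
The paper does not prove this statement: it is a quotation from Oliver \cite{Oliver1976}, and the surrounding text explicitly says ``which we reprint'' and refers to ``Oliver's proof,'' so there is no in-paper argument to compare against. Your sketch is thus a reconstruction attempt, and it has a concrete gap.

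The route to condition (2) of \cref{def:free} does not work as stated. The Serre spectral sequence of the Borel fibration $X\to EG_0\times_{G_0}X\to BG_0$ has $E^2_{p,q}=H_p(BG_0;H_q(X))$ and converges to the Borel homology $H_*(EG_0\times_{G_0}X)$, not to $\tilde{H}_*(X)$. Its $E^2$ page involves $H_*(BG_0)$, which is unbounded above (for $G_0=S^1$, $H_*(BS^1;\Z)\cong\Z$ in every even degree) and is a different object from the $k$-bounded $H_*(G_0)$ appearing in the desired isomorphism $H_*(G_0)\otimes H_n(X)\cong\tilde{H}_*(X)$. So the spectral sequence has neither the right abutment nor the right shape, and no collapse of it exhibits $\eta$ as an isomorphism; indeed the map $\eta$ never actually enters your argument. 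The hypothesis that $H_{n+k}(X)$ is $\Z$-free is also never used, though it is essential to the conclusion. For condition (3), the Swan/Rim localization to Sylow $p$-subgroups and the appeal to Smith theory are plausible ingredients, but ``a Smith-theoretic comparison \dots should then force'' is exactly the substance that has to be supplied rather than a citation of any off-the-shelf result, and your final paragraph concedes the $p$-toral case is not a direct application. What the whole sketch is missing is the central device in Oliver's argument: one works with the cellular chain complex $C_*(X)$ of the finite $G$-CW structure, where the hypothesis on $p$-toral isotropy controls the $H_*(G)$-module structure of the individual chain groups; purely homology-level spectral sequence and Smith-theory arguments do not have access to that module structure, and so cannot produce either $\eta$ or the $\Z\Gamma$-projectivity of $H_n(X)$.
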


We are now ready to prove our attaching lemma. In the finite case, this is due to tom Dieck and Petrie, \cite[Proposition 5.9]{Dieck1982}. A similar result is also proven by tom Dieck for tori, \cite[Proposition 10]{Dieck1982a}. We use ideas from the proofs of both propositions in proving \cref{lem:attaching}. 

Given an appropriate compact Lie group, if we have a $G$-CW-complex $A$ that satisfies the conditions for a homotopy representation for all subgroups of $G$ except the trivial subgroup and we have a $G$-map into a generalized homotopy representation $Y$, then the lemma enables us to attach free $G$-cells to $A$ to get a homotopy representation.
Recall here that $\dim(X)$ is the dimension of $X^{\{e\}}$ as a space.

\begin{lem}\label{lem:attaching}
Let $G$ be a compact $k$-dimensional Lie group such that if $k>0$, then for $\Gamma =G/G_0$ all $\Z\Gamma$-projective modules are stably free. Let $Y$ be a generalized homotopy representation for $G$. Let $n(H)=n(Y)(H)$ be the dimension function for $Y$, with $n=n(\{e\})$.
Let $A$ be a finite $G$-CW-complex, and $f:A\xrightarrow{} Y$ be a $G$-map. Assume that the following hold:
\begin{enumerate}
    \item $n\ge k+3$.
    \item for $H\neq \{e\}$, \[A^H\simeq S^{n(H)}\] and \[\dim(A^H)=n(H),\]
    \item for each prime $p$, if $H$ is a nontrivial $p$-toral subgroup of $G$, $d(f)(H)$ is prime to $p$,
    \item $\dim(A)\le n-k-1$.
\end{enumerate}
Then there exists a homotopy representation $X$ obtained from $A$ by attaching cells of type $G_+\wedge D^i, i\le n-k$, and a $G$-map $F:X\xrightarrow{} Y$ extending $f$. If $G$ is finite, then $d(F)(\{e\})$ is prime to $|G|$, otherwise $d(F)(\{e\})=\pm 1$.
\end{lem}

\begin{proof}
For $G$ finite this is  \cite[Proposition 5.9]{Dieck1982}.

Assume $k\ge 1$. By attaching cells of type $G_+\wedge D^i$, $i\le n-k-1$, to $A$ we get a space $B$ which is $(n-k-2)$-connected. By (4), $\dim A\le n-k-1$, so $\dim B= n-1$. From \[\pi_i(Y)=0\text{ for all }i\le n-1,\] we get a $G$-map \[g:B\xrightarrow{} Y\] that extends $f$ since the obstruction groups vanish and $B$ was constructed from $A$ using only free cells. Note that $g$ still satisfies (3), since
$B^H=A^H$. Let \[C_g=Y\cup_g(B\wedge I_+)/\sim\] be the mapping cone of the map underlying $g$. 
Since \[\tilde{H}_i(Y)=0\text{ for all }i\neq n\] and \[\tilde{H}_i(B)=0\text{ for }i\notin \{n-k-1,\dots,n-1\},\] the exact homology sequence \[\cdots\xrightarrow{}\tilde{H}_i(B)\xrightarrow{}\tilde{H}_i(Y)\xrightarrow{}\tilde{H}_i(C_g)\xrightarrow{\delta} \tilde{H}_{i-1}(B) \xrightarrow{}\cdots\] 
yields:
\begin{enumerate}[(a)]
    \item $\tilde{H}_{i+1}(C_g)\cong \tilde{H}_i(B)$ for $i\in \{n-k-1,\dots,n-2\}$,
    \item an exact sequence $0\xrightarrow{}\tilde{H}_{n}(Y)\xrightarrow{}\tilde{H}_{n}(C_g)\xrightarrow{\delta}\tilde{H}_{n-1}(B)\xrightarrow{} 0$,
    \item $\tilde{H}_i(C_g)=0$ for $i\notin \{n-k,\dots,n\}$.
\end{enumerate} The $G$-action on $C_g$ makes $\tilde{H}_*(C_g)$ into a graded $H_*(G)$-module.

Since $B$ is
$(n-1)$-dimensional, $\tilde{H}_{n-1}(B)$ is $\Z$-free. Thus $\tilde{H}_{n}(C_g)$ is $\Z$-free by (b). Now suppose that $H\subseteq G$ is a nontrivial $p$-toral subgroup for a prime $p$. Then \[\tilde{H}_*(C_g^H;\Z/p)\cong \tilde{H}_*(C_{g^H};\Z/p)=0\] because $g^H=f^H$ has degree prime to $p$ by (3). It now follows from \cite[Theorem 1]{Oliver1976} (see \cref{thm:1}) that $\tilde{H}_*(C_g)$ is a projective $H_*(G)$-module. 

Since $\tilde{H}_*(C_g)$ is a projective $H_*(G)$-module, by definition $\tilde{H}_{n-k}(C_g)$ is $\Z\Gamma$-projective and thus, by assumption, $\Z\Gamma$-stably free. That is, there exist finite-dimensional free $\Z\Gamma$-modules $F'$ and $F$ such that \[\tilde{H}_{n-k}(C_g)\oplus F'=F.\] Then by adding cells of type $G_+\wedge D^{n-k-1}$ to $B$ along nullhomotopic attaching maps \[S^{n-k-2}\xrightarrow{} B^{\{e\}},\] we can convert $\tilde{H}_{n-k}(C_g)$ to $F$. So we may assume that  $\tilde{H}_{n-k}(C_g)$ is $\Z\Gamma$-free. Therefore $\tilde{H}_*(C_g)$ is a free $H_*(G)$-module.

Choose a $H_*(G)$ basis $\{e_j\mid j\in J\}$ of $\tilde{H}_*(C_g)$. We construct a space $X$ from $B$ 
by attaching cells of type $G_+\wedge D^{n-k}$ to $B$ by using \[\delta(e_j)\in \tilde{H}_{n-k-1}(B)\cong \pi_{n-k-1}(B)\] as homotopy classes of attaching maps, noting that $n-k-1\ge 2$ by (1). The resulting space $X$ 
is $n$-dimensional. The map $g$ has an extension $F:X\xrightarrow{} Y$ since $Y$ is $(n-1)$-connected.

Let $j:B\xrightarrow{} X$ be inclusion. Then we get an induced cofibration sequence 
\begin{center}
    \begin{tikzcd}C_j\arrow[r,"\overline{F}"] & C_g\arrow[r,"\overline{j}"] & C_F\end{tikzcd}
\end{center} 
where \[\overline{F}:C_j=X\cup_j(B\wedge I_+)/\sim\ \xrightarrow{} C_g=Y\cup_g(B\wedge I_+)/\sim\]is $F$ on $X$ and the identity on $B\wedge I$ and \[\overline{j}:C_g=Y\cup_g(B\wedge I_+)/\sim\ \xrightarrow{} C_F=Y\cup_F(X\wedge I_+)/\sim\] is the identity on $Y$ and $j\wedge id$ on $B\wedge I$. This produces an exact homology sequence 
\begin{center}
    \begin{tikzcd}
    \tilde{H}_{i}(C_j)\arrow[r,"\overline{F}_*"] &
    \tilde{H}_{i}(C_g)\arrow[r,"\overline{j}_*"] &
    \tilde{H}_{i}(C_F)
    \end{tikzcd}
\end{center}

Since $X$ was constructed by attaching an ($n-k$)-cell for each $H_*(G)$-basis element of $\tilde{H}_*(C_g)$, $C_j$ is a wedge of copies of $G_+\wedge S^{n-k}$ indexed by the $H_*(G)$-basis $\{e_j\mid j\in J\}$.
Thus $\tilde{H}_{*}(C_j)$ is $H_*(G)$-free, and we have an isomorphism
\[H_{n-k}(C_j)\xrightarrow[\overline{F}_{*}]{\cong} H_{n-k}(C_g).\] 

This implies that $\overline{F}_*$ is an isomorphism in all degrees. Hence, $\tilde{H}_*(C_F)=0$. 
Thus, 
\[F_*\colon \tilde{H}_*(X) \xrightarrow{} \tilde{H}_*(Y)\] 
is an isomorphism.
Since $F$ is a map between simply connected spaces, it is a homotopy equivalence. Thus \[X^H\simeq S^{n(H)}\] and \[\dim(X^H)=n(H),\] by the homotopy equivalence for $H= \{e\}$ and by (2) for $H\neq \{e\}$. Therefore $X$ is a homotopy representation and $d(F)(\{e\})=\pm 1$.
\end{proof}

We end this section with a discussion concerning when the component group $\Gamma$ satisfies the condition that all $\Z\Gamma$-projectives are stably free. The condition that all $\Z\Gamma$-projective modules are stably free can be stated in terms of $K$-theory by requiring that $K_0(\Z\Gamma)\cong \Z$. Thus we include the following common definition.

\begin{definition}
Let $R$ be a unital ring. Denote the set of finitely-generated projective (left) $R$-modules by $\mathcal{P}(R)$. Define the equivalence relation $\sim$ on $\mathcal{P}(R)$ by $P\sim Q$ if there exist free (left) $R$-modules $F_1,F_2$ such that \[P\oplus F_1\cong Q\oplus F_2.\] The projective class group of $R$ is the abelian group \[K_0(R):=\mathcal{P}(R)/\sim.\]
\end{definition}

The compact Lie groups which satisfy the conditions of \cref{lem:attaching}  will be mentioned repeatedly through the rest of the paper. Their importance motivates the following definition.

\begin{definition}
    Let $G$ be a compact Lie group 
    such that the component group $\Gamma$ satisfies the condition that $K_0(\Z\Gamma)\cong \Z$. Then we call $G$ a \emph{stably free compact Lie group}.
\end{definition}

Finally, let us describe some examples of compact Lie groups that are stably free. In fact, all of these groups will have the slightly stronger condition that all of their projective $\Z\Gamma$-modules are free. If $G$ is connected, then it is stably free 
since all projective $\Z$-modules are free. \cref{thm:2} due to Rim tells us when $\Z\Gamma$-projectives are free for $\Gamma$ a cyclic group of prime order $p$. This, in conjunction with a result from number theory, yields a list of non-connected compact Lie groups that are stably free.

\begin{theorem}[{\cite[Theorem 6.24]{Rim1959}}]\label{thm:2}
Let $\Gamma$ be a cyclic group of prime order $p$. The projective class group $K_0(\Z\Gamma)$ is isomorphic to the ideal class group of the cyclotomic extension field $\Q(\zeta_p)$ of the rational number field. An element $P\in \mathcal{P}(\Z\Gamma)$ is free if and only if $[P]=0$ in $K_0(\Z\Gamma)$. 
\end{theorem}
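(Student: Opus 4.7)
The plan is to realize $\Z\Gamma$ as a pullback (Milnor) square and invoke the Mayer--Vietoris sequence of Milnor and Bass. Writing $\Gamma = \langle g \rangle$ and exploiting the factorization $x^p - 1 = (x-1)\Phi_p(x)$ in $\Z[x]$, the augmentation $g \mapsto 1$ and the cyclotomic projection $g \mapsto \zeta_p$ assemble into the square
\begin{equation*}
\begin{tikzcd}
\Z\Gamma \arrow[r] \arrow[d] & \Z[\zeta_p] \arrow[d] \\
\Z \arrow[r] & \mathbb{F}_p
\end{tikzcd}
\end{equation*}
whose bottom arrow is reduction mod $p$ and whose right arrow sends $\zeta_p \mapsto 1$. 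To check this is a pullback, one uses that $(1-\zeta_p)$ is the conductor of $\Z\Gamma$ inside $\Z \times \Z[\zeta_p]$, that $(1-\zeta_p)^{p-1} = (p)$, and that a pair $(a,b) \in \Z \times \Z[\zeta_p]$ lies in the image of $\Z\Gamma$ precisely when the two reductions agree in $\mathbb{F}_p$. Since the bottom map is surjective, Milnor--Bass Mayer--Vietoris applies.

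The resulting long exact sequence
\begin{equation*}
K_1(\Z) \oplus K_1(\Z[\zeta_p]) \xrightarrow{} K_1(\mathbb{F}_p) \xrightarrow{\partial} K_0(\Z\Gamma) \xrightarrow{} K_0(\Z) \oplus K_0(\Z[\zeta_p]) \xrightarrow{} K_0(\mathbb{F}_p)
\end{equation*}
simplifies substantially under the modulo-free version of $K_0$ used in the paper. Indeed, $K_0(\Z) = K_0(\mathbb{F}_p) = 0$ because those rings are principal ideal domains, while $K_0(\Z[\zeta_p]) \cong \mathrm{Cl}(\Q(\zeta_p))$ because $\Z[\zeta_p]$ is a Dedekind domain, whose finitely generated projectives are fractional ideals classified up to stable isomorphism by their ideal class. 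The desired isomorphism $K_0(\Z\Gamma) \cong \mathrm{Cl}(\Q(\zeta_p))$ therefore reduces to showing that $\partial$ is zero, equivalently that the reduction map $\Z[\zeta_p]^{\times} \xrightarrow{} \mathbb{F}_p^{\times} = K_1(\mathbb{F}_p)$ is surjective. This is the only genuine computation: the cyclotomic units $u_k := 1 + \zeta_p + \cdots + \zeta_p^{k-1}$ reduce to $k \bmod p$ for $k = 1, \ldots, p-1$, so they hit every element of $\mathbb{F}_p^{\times}$.

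For the second assertion, the nontrivial direction is that $[P] = 0$ implies $P$ is free rather than merely stably free. This is a cancellation statement: given $P \oplus \Z\Gamma^a \cong \Z\Gamma^b$, we need $P \cong \Z\Gamma^{b-a}$. Since $\Q\Gamma \cong \Q \times \Q(\zeta_p)$ is a product of commutative fields, the Eichler condition is trivially satisfied, and Swan--Jacobinski cancellation then applies to $\Z\Gamma$-lattices of positive rank to yield the claim. I expect the main technical obstacle to be the bookkeeping in the Mayer--Vietoris step: one must verify that the induced map $K_0(\Z\Gamma) \xrightarrow{} K_0(\Z[\zeta_p])$ is extension of scalars along the projection, so that the final isomorphism records a projective $P$ by the ideal class of $\Z[\zeta_p] \otimes_{\Z\Gamma} P$, and confirming the identification of $\partial$ with the classical unit-reduction map is the delicate point.
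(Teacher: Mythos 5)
The paper does not actually prove this statement: Theorem~\ref{thm:2} is imported verbatim from Rim (1959) as a black box, so there is no ``paper's own proof'' to compare against. Your proof is therefore best judged on its own, and it is essentially correct. The Milnor conductor square you write down is the right one: the pullback of $\Z \to \F_p$ and $\Z[\zeta_p]\to\F_p$ is exactly $\Z\Gamma$, the bottom map is surjective, and the Bass--Milnor Mayer--Vietoris sequence applies. With the reduced (stable-isomorphism) $K_0$ used in the paper, $K_0(\Z)=K_0(\F_p)=0$ and $K_0(\Z[\zeta_p])\cong \mathrm{Cl}(\Q(\zeta_p))$, so the sequence collapses to
\[
K_1(\Z)\oplus K_1(\Z[\zeta_p]) \longrightarrow \F_p^{\times} \xrightarrow{\ \partial\ } K_0(\Z\Gamma)\longrightarrow \mathrm{Cl}(\Q(\zeta_p))\longrightarrow 0,
\]
and your observation that the cyclotomic units $u_k = 1+\zeta_p+\cdots+\zeta_p^{k-1}$ reduce to $k$ modulo $(1-\zeta_p)$ shows the first map is already surjective, hence $\partial=0$ and $K_0(\Z\Gamma)\cong\mathrm{Cl}(\Q(\zeta_p))$. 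For the cancellation half, invoking Swan--Jacobinski is legitimate: $\Q\Gamma\cong\Q\times\Q(\zeta_p)$ is a product of fields so the Eichler condition holds trivially, projective $\Z\Gamma$-modules are locally free (Swan), and the cancellation theorem yields stably free $\Rightarrow$ free.

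A few small points worth tightening. The conductor of $\Z\Gamma$ in $\Z\times\Z[\zeta_p]$ is $(p)\times(1-\zeta_p)$, not $(1-\zeta_p)$ alone; what matters for the pullback is only that both quotients by the conductor identify with $\F_p$ and the gluing datum is the diagonal. The ``bookkeeping'' you flag at the end --- that $K_0(\Z\Gamma)\to K_0(\Z[\zeta_p])$ is extension of scalars --- is not really a technical obstacle; it is built into the construction of the Mayer--Vietoris sequence, since all the horizontal maps in $K_*$ are induced by the ring maps in the square. Finally, note that your route is genuinely different from Rim's original one and necessarily so: Rim's 1959 argument predates Milnor's $K$-theory, the Bass--Milnor Mayer--Vietoris sequence, and Jacobinski's cancellation theorem by roughly a decade. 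Rim works directly with the module theory of $\Z C_p$, showing that a projective splits as a free module plus a rank-one piece classified by an ideal of $\Z[\zeta_p]$ via the augmentation/cyclotomic pullback, and proves cancellation by hand in this specific case. Your proof buys concision and conceptual clarity at the price of heavier machinery; Rim's buys self-containment at the price of longer explicit module computations.
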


\begin{remark}\label{rmk:3}
It has been shown (\cite[Theorem 11.1]{Washington1997}) that the ideal class group of $\Q(\zeta_p)$ for a prime $p$ is trivial if and only if $p$ is in the set \[\Lambda=\{2,3,5,7,11,13,17,19\}.\]
Thus $\Z\Gamma$-projective modules are stably free and \cref{lem:attaching} holds when $G$ is a connected compact Lie group or when $G$ is a compact Lie group such that $\Gamma$ is a cyclic group of order $p$ with $p\in \Lambda$. 
\end{remark}

\begin{ex}\label{orthogonal}
    For $n>0$, the orthogonal group $O(n)$ has component group $\Z/2$, so $O(n)$ is stably free.
\end{ex}
\begin{ex}\label{liegroups}
    The following groups are all connected and thus stably free:
    \begin{itemize}
        \item for $n>0$, the special orthogonal group $SO(n)$,
        \item for $n>2$, the spin group $\text{Spin}(n)$,
        \item for $n>0$, the unitary group $U(n)$,
        \item for $n>0$, the special unitary group $SU(n)$,
        \item for $n>0$, the compact symplectic group $\text{Sp}(n)$, and
        \item the compact forms of the five exceptional Lie groups: $G_2, F_4, E_6, E_7, E_8$.
    \end{itemize}
\end{ex}

\section{An Existence Lemma}\label{GHR}
In this section, we describe a set of conditions on a generalized homotopy representation that guarantee that its $G$-homotopy equivalence class contains a homotopy representation. In fact, we show a construction of such a homotopy representation using \cref{lem:attaching}, a result due to tom Dieck, and a couple of results due to tom Dieck and Petrie.

The first result we need is  \cite[Theorem 8.4.1]{Dieck1979}. It depends on a long list of assumptions which we describe here. 

Let $X$ be a finite-dimensional $G$-CW-complex of finite orbit type. Let $n(H)$ be the dimension of $X^H$ as a space. Assume the following:
\begin{enumerate}
    \item for $H\in \Iso(X)$, $n(H)>0$,
    \item for $H,K\in \Iso(X)$, if $H\subsetneq K$, then $n(H)>n(K)$,
    \item $H^{n(H)}(X^H;\Z)\cong \Z$.
\end{enumerate}
As in \cref{def:orientation}, the action of $WH$ on $X^H$ induces a homomorphism 
\[e_{X,H}:WH\xrightarrow{} \Aut(\Z)\] which we call the orientation behaviour of $X$ at $H$. Let $Z_{X,H}$
be the $WH$-module defined by $e_{X,H}$. Observe that \[(X^H,\bigcup_{K\supsetneq H} X^K)\xrightarrow{} (X^H/WH, (\bigcup_{K\supsetneq H} X^K)/WH)\] 
is a covering map, thus $Z_{X,H}$ can be interpreted as local coefficients on \[(X^H/WH, (\bigcup_{K\supsetneq H} X^K)/WH)\] and \[H^{n(H)}(X^H/WH, (\bigcup_{K\supsetneq H} X^K)/WH;Z_{X,H})\] arises from the cochain complex $\Hom_{WH}(C_{*}(X^H,\bigcup_{K\supsetneq H} X^K),Z_{X,H})$. Assume that if $WH$ is finite, then 
\[H^{n(H)}(X^H/WH, (\bigcup_{K\supsetneq H} X^K)/WH;Z_{X,H})\cong \Z.\]

Let $Y$ be a $G$-space. For $H\in \Iso(X)$, $Y^H$ is $(n(H)-1)$-connected and \[\pi_{n(H)}(Y^H)\cong \Z.\] It follows that $H^{n(H)}(Y^H)\cong \Z$ and we have an orientation behaviour \[e_{Y,H}:WH\xrightarrow{} \Aut(\Z)\] for $Y$ at $H$. Assume $e_{Y,H}=e_{X,H}$ for $H\in\Iso(X)$. We give $X$ an orientation by selecting generators for $H^{n(H)}(X^H)$ for every $H\subseteq G$. We orient $Y$ in the same way. Then for any map $f:X\xrightarrow{} Y$ the fixed point map $f^H$ has a well-defined degree $d(f)(H)=\deg(f^H)$.

\begin{prop}[{\cite[Theorem 8.4.1]{Dieck1979}}]\label{prop:Dieck}
Let $X$ be a finite-dimensional $G$-CW-complex of finite orbit type and $Y$ be a $G$-space. For $H\subseteq G$, let $n(H)=\dim(X^H)$ be the dimension of $X^H$ as a space. 

Assume the following:
\begin{enumerate}
    \item for $H\in \Iso(X)$, $n(H)>0$,
    \item for $H,K\in \Iso(X)$, if $H\subsetneq K$, then $n(H)>n(K)$,
    \item $H^{n(H)}(X^H)\cong \Z$,
    \item
    if $WH$ is finite, then \[H^{n(H)}(X^H/WH, (\bigcup_{K\supsetneq H} X^K)/WH;Z_{X,H})\cong \Z,\]
    \item for $H\in \Iso(X)$, $Y^H$ is $(n(H)-1)$-connected and \[\pi_{n(H)}(Y^H)\cong \Z,\]
    \item for $H\in \Iso(X)$, $e_{X,H}=e_{Y,H}$ and orientations have been chosen for $X$ and $Y$. 
\end{enumerate}
Under the assumptions above we have the following:
\begin{enumerate}
    \item[(a)] the equivariant homotopy set $[X, Y]_G$ is not empty,
    \item[(b)] elements $[f]\in[X, Y]_G$ are determined by the set \[\{d(f)(H)\mid H\in \Iso(X), |WH|<\infty\},\]
    \item[(c)] the value of each $d(f)(H)$ in the above set is  determined modulo $|WH|$ by the integers \[d(f)(K), \ \ K\supsetneq H\] and fixing these $d(f)(K)$ the possible $d(f)(H)$ fill the whole residue class mod $|WH|$.
\end{enumerate}
\end{prop}

There are a lot of conditions in the previous theorem, but it turns out that these are satisfied by a homotopy representation and a generalized homotopy representation which share a dimension function with the added conditions that $n(G)>0$ and for $H,K\in \Iso(X)$,\[H\subsetneq K\implies n(H)>n(K)+1.\]

In order to prove this we first include the following lemma. Note that (2') is a strengthening of (2).

\begin{lemma}[{\cite[Lemma 8.4.1]{Dieck1979}}]\label{lem:cohomology}
Let $X$ be a finite-dimensional $G$-CW-complex of finite orbit type. For $H\subseteq G$, let $n(H)=\dim(X^H)$ be the dimension of $X^H$ as a space. 

Assume the following:
\begin{enumerate}
    \item[(1)] for $H\in \Iso(X)$, $n(H)>0$,
    \item[(2')] for $H,K\in \Iso(X)$, if $H\subsetneq K$, then $n(H)>n(K)+1$,
    \item[(3)] $H^{n(H)}(X^H;\Z)\cong \Z$.
\end{enumerate} 
Then if $WH$ is finite, \[H^{n(H)}(X^H/WH, (\bigcup_{K\supsetneq H} X^K)/WH;Z_{X,H})\cong \Z.\]
\end{lemma}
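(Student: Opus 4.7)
The plan is to compute the top twisted cohomology via the equivariant cellular cochain complex of the pair $(X^H, Y)$, where $Y = \bigcup_{K \supsetneq H} X^K$, and to reduce to the non-equivariant integer cohomology $H^{n(H)}(X^H; \Z) \cong \Z$ provided by hypothesis (3). Assume throughout that $WH$ is finite.

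Setting $d = n(H)$, hypothesis (2') implies $\dim Y \leq d - 2$: any $K \in \Iso(X)$ strictly containing $H$ satisfies $n(K) \leq d - 2$, and for an arbitrary closed subgroup strictly containing $H$ one passes to the smallest isotropy subgroup containing it. The cells of the relative CW pair $(X^H, Y)$ all lie in $X^H \setminus Y$, on which $WH$ acts freely: if $x \in X^H$ has nontrivial $WH$-stabilizer then $G_x \supsetneq H$, forcing $x \in Y$. Consequently $C_*(X^H, Y)$ is a complex of free $\Z[WH]$-modules that vanishes above dimension $d$.

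Because $C_*(X^H, Y)$ is a complex of free $\Z[WH]$-modules, the twisted cohomology identifies with the cohomology of the equivariant cochain complex:
\[
H^d\bigl(X^H/WH,\ Y/WH;\ Z_{X,H}\bigr) \cong H^d\bigl(\Hom_{WH}(C_*(X^H, Y),\, Z_{X,H})\bigr).
\]
Writing $C_d \cong \Z[WH]^{I_d}$, $C_{d-1} \cong \Z[WH]^{I_{d-1}}$, and using $C_{d+1} = 0$, this reduces to $\mathrm{coker}\bigl(\delta: Z_{X,H}^{I_{d-1}} \to Z_{X,H}^{I_d}\bigr)$. The non-equivariant analogue gives $H^d(X^H, Y; \Z) \cong H^d(X^H; \Z) \cong \Z$ by hypothesis (3) and the long exact sequence of the pair (using $\dim Y \leq d - 2$ to kill $H^{d-1}(Y; \Z)$ and $H^d(Y; \Z)$), with $WH$-action equal to $e_{X,H}$; that is, $H^d(X^H, Y; \Z) \cong Z_{X,H}$ as $\Z[WH]$-modules. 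A cochain-level comparison then identifies the equivariant cokernel with $\Hom_{WH}(Z_{X,H}, Z_{X,H}) \cong \Z$.

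The main obstacle is this last comparison step: one must verify carefully that the twisted $WH$-action on $Z_{X,H}$ cancels the $WH$-action on $H^d(X^H; \Z)$, so that the equivariant cokernel is exactly $\Z$ rather than being enlarged by torsion coming from higher group cohomology. The crucial input is that, since $C_d$ and $C_{d-1}$ are free $\Z[WH]$-modules and $\Z[WH]$ is self-dual over $\Z$, the integer duals $\Hom_\Z(C_d, \Z)$ and $\Hom_\Z(C_{d-1}, \Z)$ are again projective $\Z[WH]$-modules. This projectivity kills the higher $\mathrm{Ext}$-obstructions that would otherwise separate the equivariant cokernel from the non-equivariant one, yielding the desired isomorphism $H^{n(H)}(X^H/WH,\, Y/WH;\, Z_{X,H}) \cong \Z$.
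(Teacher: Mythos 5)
Your setup is sound and parallels the paper's: reduce, via $\dim\bigl(\bigcup_{K\supsetneq H}X^K\bigr)\le d-2$, to the top two degrees of the cochain complex $\Hom_{WH}(C_*(X^H),Z_{X,H})$, where the relevant chain modules are free over $\Z[WH]$. But the heart of the lemma is the step you flag as "the main obstacle," and your treatment of it is not a proof. Saying that projectivity of $\Hom_\Z(C_d,\Z)$ and $\Hom_\Z(C_{d-1},\Z)$ "kills the higher $\mathrm{Ext}$-obstructions" does not follow: that projectivity is just a restatement of the freeness of $C_d,C_{d-1}$, which is already built into the setup, and the hyper-cohomology spectral sequence $E_2^{p,q}=H^p\bigl(WH;H^q(X^H,Y;\Z)\otimes Z_{X,H}\bigr)\Rightarrow H^{p+q}(\Hom_{WH}(C_*,Z_{X,H}))$ has potentially nonzero torsion terms $E_2^{p,d-p}$ for $p\ge 1$; nothing you've said forces these to die or to fail to contribute to total degree $d$. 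You would need a concrete mechanism.

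The paper supplies exactly such a mechanism, in two pieces, both using the norm (averaging) map $N\colon f\mapsto\sum_{w\in WH}wf(w^{-1}\cdot)$. First, since $C_d$ is $\Z[WH]$-free, $N\colon\Hom_\Z(C_d,\Z)\to\Hom_{WH}(C_d,Z_{X,H})$ is surjective, so the transfer $\phi\colon H^d(X^H;\Z)\to H^d(X^H/WH;Z_{X,H})$ is surjective, forcing the target to be cyclic. Second, a cochain-level manipulation (choosing $\psi\colon C_d\to C_d$ with $N(\psi)=\mathrm{id}$ and showing $\varphi\in\ker\delta_1$ implies $\varphi=N(\varphi')\circ\partial$ for some $\varphi'$) shows $\ker\delta_1=\mathrm{Im}\,\delta_0$, so $H^d(X^H/WH;Z_{X,H})$ embeds into $\Hom_{WH}(Z_{X,H},Z_{X,H})\cong\Z$ and is therefore torsion-free. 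Together these give $\cong\Z$. Note also that your intermediate claim that the equivariant cokernel literally \emph{equals} $\Hom_{WH}(Z_{X,H},Z_{X,H})$ is not justified and need not hold: $\delta_1$ is only shown to be injective modulo $\mathrm{Im}\,\delta_0$, not surjective, so a priori one only obtains a nonzero subgroup of $\Z$. The abstract isomorphism with $\Z$ survives, but your reasoning route to it does not. To repair the proof you should replace the $\mathrm{Ext}$-vanishing heuristic with the explicit norm-map argument.
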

\begin{proof}
    We include tom Dieck's proof, supplementing with details from \cite[Chapter II.4]{Dieck1979}. 

    From the assumption that $n(H)>n(K)+1$,
    we have that for $s\ge n(H)-1$, \[H^{s}((\bigcup_{K\supsetneq H} X^K)/WH;Z_{X,H})=0.\] From the long exact sequence associated to a pair, we get that \[H^{n(H)}(X^H/WH, (\bigcup_{K\supsetneq H} X^K)/WH;Z_{X,H})\cong H^{n(H)}(X^H/WH;Z_{X,H}).\] 
    Recall that $H^*(X^H/WH;Z_{X,H})$ arises from the cochain complex $\Hom_{WH}(C_{*}(X^H),Z_{X,H})$. Since $n(H)>n(K)+1$,  $C_{n(H)}(X^H)$ is a free $WH$-module. Therefore the norm map 
    \begin{align*}
        \Hom(C_{n(H)}(X^H),\Z)&\xrightarrow{N}\Hom_{WH}(C_{n(H)}(X^H),Z_{X,H}),\\ f(x)&\mapsto \sum_{w\in WH}wf(w^{-1}x)
    \end{align*}  
    is surjective. From this the map on cohomology \[H^{n(H)}(X^H;\Z)\xrightarrow{\phi} H^{n(H)}(X^H/WH;Z_{X,H})\] is surjective. Let $p:X^H\xrightarrow{} X^H/WH$, and let \[p^*:H^{n(H)}(X^H/WH;Z_{X,H})\xrightarrow{} H^{n(H)}(X^H;\Z)\] be the induced map on cohomology. Then $p^*\circ\phi$ is multiplication by $|WH|$. So it suffices to show that $H^{n(H)}(X^H/WH;Z_{X,H})$ is torsion free. The left exact sequence \[0\xrightarrow{} Z_{X,H}\xrightarrow{j} C_{n(H)}(X^H)\xrightarrow{\partial} C_{n(H)-1}(X^H)\] induces a sequence 
    \begin{align*}
        \Hom_{WH}(C_{n(H)-1}(X^H),Z_{X,H})&\xrightarrow{\delta_0} \Hom_{WH}(C_{n(H)}(X^H),Z_{X,H})\\
        &\xrightarrow{\delta_1} \Hom_{WH}(Z_{X,H},Z_{X,H})=\Z.
    \end{align*}
    We will show that $\ker \delta_1=\text{Im}\ \delta_0$. Suppose $\varphi\in\ker\delta_1$. Let \[\psi:C_{n(H)}(X^H)\xrightarrow{} C_{n(H)}(X^H)\] be a map with $N(\psi)=\id$. Then $\varphi\psi j\in \Hom_{WH}(Z_{X,H},Z_{X,H})$. Then \[|WH|\varphi\psi j=N(\varphi\psi j)=\varphi N(\psi)j=\varphi j=0.\] So $\varphi\psi j=0$ and there exists \[\varphi'\in \Hom_{WH}(C_{n(H)-1}(X^H),Z_{X,H})\] such that $\varphi\psi=\varphi'\partial$. However, when we apply the norm to make $\varphi\psi$ equivariant we have $N(\varphi\psi)=\varphi$, so $\varphi\in \text{Im }\delta_0$ and $H^{n(H)}(X^H/WH;Z_{X,H})=\Z$.
\end{proof}

\begin{lemma}\label{lem:conditions}
Let $G$ be a compact $k$-dimensional Lie group. Let $X$ be a homotopy representation and $Y$ a generalized homotopy representation with $n(X)=n(Y)$ such that $n(G)>0$ and for $H,K\in \Iso(X)$, \[H\subsetneq K\implies n(H)>n(K)+1.\] All conditions of \cref{prop:Dieck} are satisfied by $X$ and $Y$.
\end{lemma}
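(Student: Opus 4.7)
The plan is to verify each of the six hypotheses of \cref{prop:Dieck} in turn for the pair $(X,Y)$, using the definitions of (generalized) homotopy representations, the equality $n(X)=n(Y)$, the strengthened dimension gap, and previously established results. Most of the conditions fall out of the definitions; only condition (4) requires a separate argument via \cref{lem:cohomology}.

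First I would dispose of the conditions that follow directly. Condition (2) is immediate from the strengthened gap $n(H)>n(K)+1\ge n(K)$. For condition (1), note that $G\in\Iso(X)$ since $X$ is based, $n(G)>0$ by hypothesis, and any other $H\in\Iso(X)$ satisfies $H\subsetneq G$, whence $n(H)>n(G)+1>0$. Condition (3) holds because $X$ is a homotopy representation, so $X^H\simeq S^{n(H)}$ and hence $H^{n(H)}(X^H;\Z)\cong\Z$. For condition (5), $Y$ is a generalized homotopy representation with $n(Y)=n(X)$, so $Y^H\simeq S^{n(H)}$ is $(n(H)-1)$-connected with $\pi_{n(H)}(Y^H)\cong\Z$. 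Condition (6) is essentially a restatement of \cref{prop:5}: since $n(X)=n(Y)$, the spaces share orientation behaviour at every subgroup and admit coherent orientations.

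The remaining work is condition (4), which is where the strengthened dimension gap really pulls its weight. Here I would simply invoke \cref{lem:cohomology} applied to $X$. Its hypotheses are (1) $n(H)>0$ for $H\in\Iso(X)$, (2$'$) the strict gap $n(H)>n(K)+1$ when $H\subsetneq K$, and (3) $H^{n(H)}(X^H;\Z)\cong\Z$, all of which have already been verified above (with (2$'$) being precisely our standing hypothesis). The lemma then produces the required isomorphism $H^{n(H)}(X^H/WH,(\bigcup_{K\supsetneq H}X^K)/WH;Z_{X,H})\cong\Z$ for every $H\in\Iso(X)$ with $WH$ finite.

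I don't expect any substantial obstacle in this lemma itself: it is essentially a packaging statement verifying that our setting fits the framework of tom Dieck's theorem. The real content sits in \cref{lem:cohomology}, where the gap $n(H)>n(K)+1$ is used to make $C_{n(H)}(X^H)$ a free $WH$-module and hence to run the norm-map argument. Everything else is bookkeeping against the definitions.
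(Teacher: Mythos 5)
Your proof is correct and follows essentially the same approach as the paper: verify each of the six conditions in turn, invoking \cref{lem:cohomology} for condition (4) and \cref{prop:5} for condition (6), with the remaining conditions falling out of the definitions and the hypothesis $n(X)=n(Y)$. The only cosmetic difference is that the paper dispatches condition (1) by the general monotonicity fact $n(H)\ge n(G)$ for all $H\subseteq G$, whereas you route through the strengthened gap; both are valid.
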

\begin{proof}
    Condition 1 holds since $n(G)>0$ and $n(H)\ge n(G)$ for all $H\subseteq G$. Condition 2 is assumed. Conditions 3 and 5 are direct consequences of the fact that \[X^H\simeq Y^H\simeq S^{n(H)}.\] Condition 4 is \cref{lem:cohomology}. Condition 6 is \cref{prop:5}.     
\end{proof}

The following corollary is a direct consequence of the previous two results. Given a map $f:X\xrightarrow{} Y$ between a homotopy representation and  a generalized homotopy representation that satisfy the assumptions of \cref{lem:conditions}, the following corollary states that for all isotropy subgroups $H$ with finite $WH$, given an integer $d$ congruent to $d(f)(H)$ modulo $|WH|$, we may assume without loss of generality that $d(f)(H)=d$.

\begin{cor}\label{prop:4}
Assume that $X$ is a homotopy representation and $Y$ is a generalized homotopy representation such that \[n(X)=n(Y).\] Let $f:X\xrightarrow{} Y$ be a $G$-map.  Assume $n(G)>0$. Suppose $H\in \Iso(X)$ such that
\begin{enumerate}
    \item $WH$ is finite, and
    \item for any $K\in \Iso(X)$ that properly contains $H$, $n(H)>n(K)+1$.
\end{enumerate} 
Then for every $d\in \Z$ such that \[d\equiv d(f)(H)\mod |WH|,\] there exists a $G$-map $g:X\xrightarrow{} Y$ such that $d(g)(H)=d$, and for every $K$ such that $K\supsetneq H$, $g\vert_{X^K}\simeq f\vert_{X^K}$.
\end{cor}
\begin{proof}
    By \cref{lem:conditions}, we can apply \cref{prop:Dieck}. \cref{prop:Dieck} states that the $G$-homotopy classes of maps are determined by the fixed point degrees for isotropy subgroups $L$ with finite $WL$. Assume that for each $K\supsetneq H$, $d(f)(K)$ has been fixed. This determines $d(f)(H)$ modulo $|WH|$, and for each $d\equiv d(f)(H)\mod |WH|$, there is a homotopy equivalence class $[g]$ with representative $g$ such that $d(g)(H)=d$. Furthermore, for each $K\supsetneq H$, $d(g)(K)=d(f)(K)$. Since these are maps between spheres, $g\vert_{X^K}\simeq f\vert_{X^K}$.
 \end{proof}

In \cref{prop:6}, we will construct a homotopy representation that is $G$-homotopy equivalent to a generalized homotopy representation by attaching $WH$-cells for each isotropy subgroup $H$. In order for this to work, we need to be able to extend the resulting map of $WH$-spaces to a map between $G$-spaces.

\begin{lem}[{\cite[Lemma 4.11]{Dieck1982}}]\label{lem:3}
Let \[k:A\xrightarrow{} Y\] be an equivariant map between finite-dimensional $G$-CW-complexes. For $H\subseteq G$, let $W$ be a finite-dimensional $WH$-CW-complex containing $A^H$ as a subcomplex. Let 
\[h:W\xrightarrow{} Y^H\]
be a $WH$-map extending $g^H$. If $WH$ acts freely on $W\backslash A^H$, then there is a unique $G$-CW-complex $X$ containing $A\cup W$ such that
\[X/A=(G_+\wedge_{NH}W)/(G_+\wedge_{NH}A^H),\]
and a $G$-map 
\[f:X\xrightarrow{} Y\] 
that extends $k$ and $h$. 
\end{lem}

\begin{remark}\label{rmk:2}
The space $X$ is constructed from $A$ using only $G/H$-cells with attaching maps that are the unique $G$-extensions of the attaching maps for $W$. Since $WH$ acts freely on $W\setminus A^H$, there are no fixed points added for subgroups $K\supsetneq H$. So for $K\supsetneq H$, \[X^K=A^K\text{ and }f^K=k^K.\] Each cell attached to $A$ to get $X$ is of the form $(G/NH)_+\wedge WH_+\wedge D^i$ where $WH_+\wedge D^i$ is a cell in $W$. So all $H$-fixed points of $X$ are contained in $W$, and \[X^H=W\text{ and } f^H=h.\]
\end{remark}

In the finite version of \cref{lem:attaching}, we get a homotopy representation $X$ with a $G$-map $F$ to a generalized homotopy representation such that $\deg(F^H)$ is prime to $|WH|$. The next lemma allows us to attach cells to $X$ and alter the map so that it has the degree we want.

\begin{lemma}[{\cite[Lemma 6.4]{Dieck1982}}]\label{lem:4}
    Let $G$ be a finite group. Let $Z$ be a $n$-dimensional $G$-CW-complex such that \[Z\simeq S^n, \text{ for some }n\ge 3.\]  Suppose $Z$ is obtained from its $(n-1)$-skeleton $Z_{n-1}$ by attaching cells of type $G_+\wedge D^n$. Let $l\in \Z$ be prime to $|G|$. 
    
    Then there exists an $n$-dimensional $G$-CW-complex $B$ obtained from $Z_{n-1}$ by attaching cells of type $G_+\wedge D^{n-1}, G_+\wedge D^n$ and a $G$-map $\varphi:B\xrightarrow{} Z$ such that:
    \begin{enumerate}
        \item $B\simeq S^n$,
        \item $\deg(\varphi)=l$,
        \item $\varphi\vert_{B_{n-2}}=id$ (note: $B_{n-2}=Z_{n-2})$.
    \end{enumerate}
\end{lemma}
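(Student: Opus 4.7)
The plan is to build $B$ from $Z_{n-1}$ by adjoining a single free $(n-1)$-cell and a carefully chosen collection of free $n$-cells. First, I attach a free $(n-1)$-cell $\tilde{e} = G_+ \wedge D^{n-1}$ to $Z_{n-1}$ via a constant map to the basepoint, forming $B_{n-1} = Z_{n-1} \vee (G_+ \wedge S^{n-1})$. Since $Z \simeq S^n$ with $n \ge 3$, the $(n-1)$-skeleton $Z_{n-1}$ is $(n-2)$-connected, so $B_{n-1}$ is as well, and Hurewicz gives
\[\pi_{n-1}(B_{n-1}) \cong H_{n-1}(B_{n-1}) \cong H_{n-1}(Z_{n-1}) \oplus \Z G \cdot \tilde{e}.\]
This identification lets me specify attaching maps of $n$-cells in $B$ by their cellular boundary classes.

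Next, I attach $n$-cells of two types: modified versions of the $n$-cells $F_i$ of $Z$, where the attaching map is composed with an additional small loop wrapping $\tilde{e}$ with coefficient $\epsilon_i \in \Z G$, so that $d(F_i') = d^Z(F_i) + \epsilon_i \tilde{e}$; and one or more extra $n$-cells whose attaching maps represent $\Z G$-multiples of $\tilde{e}$. I define $\varphi \colon B \to Z$ to be the identity on $Z_{n-1}$, the constant map to the basepoint on $\tilde{e}$ and on the extra $n$-cells, and the original characteristic map on each $F_i'$. Extensions exist because $\pi_{n-1}(Z) = 0$; moreover the freedom in each $n$-cell extension is an element of $\pi_n(Z) = \Z$, providing the flexibility to adjust the degree of $\varphi$ cell by cell.

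The main obstacle is simultaneously arranging $H_n(B) = \Z$, $H_{n-1}(B) = 0$, and $\deg(\varphi) = l$. Each condition translates into an equation on the coefficients $\epsilon_i$, the attaching data of the extra $n$-cells, and the degree of $\varphi$ on each new $n$-cell. The combined system lives in $\Z G$ and, after passing to augmentation and computing greatest common divisors of the coefficients of the generator $z_0 = \sum a_i F_i \in \ker d_n^Z$ of $H_n(Z)$, reduces to a congruence modulo $|G|$. The hypothesis $\gcd(l, |G|) = 1$ enters precisely here: by a Bezout argument, the coprimality allows one to invert $l$ in the relevant quotients of $\Z G$, which permits a joint choice of parameters that cancels the chain complex down to the homology of $S^n$ while simultaneously achieving degree $l$ on top-dimensional homology. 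Once $B$ is simply connected (since $n-1 \ge 2$) with $H_*(B) = H_*(S^n)$, Whitehead's theorem gives $B \simeq S^n$, and the constraint $\varphi|_{B_{n-2}} = \mathrm{id}$ holds by construction.
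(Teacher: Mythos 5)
The paper does not prove this statement; it simply cites \cite[Lemma 6.4]{Dieck1982}. So there is nothing in the paper to compare against, and your proposal has to stand or fall on its own.

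The outline --- wedge on one free $(n-1)$-cell, re-attach the $n$-cells of $Z$ with perturbed attaching maps, add extra free $n$-cells, and define $\varphi$ by killing the new $(n-1)$-cell --- is the right kind of move and is consistent with the shape of the constructions tom Dieck and Petrie use throughout. However, the load-bearing step is asserted rather than proved, and the assertion as written is not correct. You claim that the conditions $H_n(B)\cong\Z$, $H_{n-1}(B)=0$ and $\deg\varphi=l$ ``reduce to a congruence modulo $|G|$'' after passing to augmentation, so that Bezout and $\gcd(l,|G|)=1$ finish the job. But the obstruction to $H_{n-1}(B)=0$ does not live in $\Z/|G|$; it lives in the $\Z G$-module $C_{n-1}(B)$. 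Writing $e$ for the chain generated by the new $(n-1)$-cell, $A=\sum a_i\epsilon_i$, and $c_j$ for the attaching data of the extra $n$-cells, the condition $H_{n-1}(B)=0$ is exactly that the $\Z$-span of $A$ together with the $\Z G$-ideal generated by the $c_j$ equals all of $\Z G\cdot e$; this is a statement about an ideal in $\Z G$, not its image under $\epsilon\colon\Z G\to\Z$. Having augmentation $\Z$ is far from sufficient: the natural candidate $(l,N)\subseteq\Z G$ (with $N$ the norm element) has augmentation $\Z$ when $\gcd(l,|G|)=1$, yet $\Z G/(l,N)\cong(\Z G/N)/l(\Z G/N)$ is nonzero whenever $|l|>1$ --- this is the Swan-module phenomenon. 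So the naive choice of parameters fails, and there is a genuine competition between making $H_{n-1}(B)=0$ (which pushes the $c_j$ toward units, overkilling) and keeping $H_n(B)\cong\Z$ (which forbids units). Moreover $\deg\varphi$ is constrained by the augmented coefficients of the generating $n$-cycle $\sum a_i F_i$ of $Z$, whose $\gcd$ you cannot choose; it is not obvious that the attainable degrees form the full residue class of $l$. In short, the reduction to arithmetic in $\Z$ is a gap, and the argument as written does not establish that a consistent choice of $\epsilon_i$, $c_j$, and extension degrees exists.
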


\begin{remark}
    In \cref{lem:4}, if $Z$ is a homotopy representation then so is $B$ since there are only free orbits added and $B$ still satisfies the defining characteristics of a homotopy representation.
\end{remark}

With the following two definitions we will have all of the tools at our disposal to prove the main result of this section, \cref{prop:6} below. 

\begin{definition}
    Let $G$ be a compact Lie group. A \emph{closed family $\cF$ of subgroups of $G$} is a collection of closed subgroups such that if $H\in \cF$, then every $K$ containing $H$ is in $\cF$ and every conjugate of $H$ is in $\cF$.
\end{definition}

\begin{definition}
Let $G$ be a compact $k$-dimensional Lie group, for every $H\subseteq G$, let $w_H$ be the dimension of $WH$ as a compact Lie group.
\end{definition}

\begin{lem}\label{prop:6}
Let $G$ be a compact Lie group that is finite or stably free. Let $Y$ be a generalized homotopy representation such that: 
\begin{enumerate}[{(a)}]
\item for all $H\subseteq G$ closed, $n(H)\ge w_H+ 3$,
\item if $H\in \Iso(Y)$, then for all $K\supsetneq H$, \[n(H)\ge n(K)+w_H+1,\]
\item $\Iso(Y)$ is closed under intersections.
\end{enumerate}
Then there exists a homotopy representation $X$ and a $G$-homotopy equivalence $f\colon X \xrightarrow{} Y$.
\end{lem}

\begin{proof}
The proof proceeds inductively over orbit types of $Y$.  The initial step is to take the closed family $\cF=\{G\}$ and set $X(\cF)=S^{n(G)}$ with a trivial $G$-action.

Assume we have a closed family of subgroups $\cF$, a $G$-CW-complex $X(\cF)$, and a $G$-map $f_{\cF}:X(\cF)\xrightarrow{} Y$ such that:
\begin{enumerate}[{(i)}]
    \item $\Iso(X(\cF))=\cF\cap \Iso(Y)$,
    \item for $K\in \cF$, $X(\cF)^K\simeq Y^K$ and $\dim X(\cF)^K=n(K)$,
    \item for $K\in \cF$, $\deg(f_{\cF}^K)=\pm 1$.
\end{enumerate}

Note that since $\Iso(Y)$ is closed under intersections, for any subgroup $K\subseteq G$ there exists a minimal isotropy group containing $K$, (see \cref{prop:1}).
Take a maximal $H\in \Iso(Y)\setminus\cF$ and define 
\[\cF'=\cF\cup \{K\mid K\supseteq \Tilde{H}\text{ for some }\Tilde{H}\in(H)\}.\] 
We want to now construct $X(\cF')$ and an extension $f_{\cF'}$ of $f_{\cF}$ such that i.-iii. are satisfied. 

We will apply \cref{lem:attaching} to the $WH$-map \[f_{\cF}^H:X(\cF)^H\xrightarrow{} Y^H
.\] To do so we check the four conditions from \cref{lem:attaching} with $k=w_H$.
\begin{enumerate}
    \item By assumption, $n(H)\ge w_H+3$.
    \item For any $K/H\in WH$, with $K\supsetneq H$, so that $K\in\cF$, we get from (ii) that \[(X(\cF)^H)^{K/H}=X(\cF)^K\simeq S^{n(K)}\] and \[\dim((X(\cF)^H)^{K/H})=n(K).\]
    \item For any $K/H\in WH$, with $K\supsetneq H$, so that $K\in\cF$, we get from (iii)  that \[(\deg(f_{\cF}^H))^{K/H}=\deg(f_{\cF}^K)=\pm1,\] which implies that for each prime $p$, $(\deg(f_{\cF})^H)^{K/H}$ is prime to $p$ if $K/H$ is a nontrivial $p$-toral subgroup of $WH$.
    \item By assumption, \[n(H)\ge n(K)+w_H+1.\] So \[n(H)-w_H-1\ge n(K)=\dim(X(\cF)^K).\]
    Since 
    \[\dim(X(\cF))=\max\{\dim(X(\cF)^K)\mid K\in \cF\}\]
    it follows that 
    \[n(H)-w_H-1\ge \dim(X(\cF))\ge \dim(X(\cF)^H).\]
\end{enumerate}

So we apply \cref{lem:attaching} and get one of the two following cases:
\begin{enumerate}
    \item if $w_H\neq 0$, we get a $WH$-homotopy representation $X'(\cF')$ and a $WH$-map \[f':X'(\cF')\xrightarrow{} Y^H,\] with $d(f')(H)=\pm 1$, or
    
    \item if $w_H=0$, we get a $WH$-homotopy representation $X''(\cF')$ and a $WH$-map \[f'':X''(\cF')\xrightarrow{} Y^H,\] with $d(f'')(H)$ prime to $|WH|$. 
\end{enumerate} 

We now examine what to do in case (2). Let $l$ be such that \[l\cdot d(f'')(H)\equiv 1\mod |WH|.\] We want to now apply \cref{lem:4} with $Z=X''(\cF')$ and $G=WH$. To do so we observe that \[\dim(X(\cF)^H)\le n(H)-1\] and $X''(\cF')$ is constructed from $X(\cF)^H$ by attaching free cells \[WH_+\wedge D^l,\ l\le n(H).\] So $X''(\cF')$ is constructed from its $(n(H)-1)$-skeleton by attaching free cells. Then since $l$ is prime to $|WH|$ we can apply \cref{lem:4}. This yields a $WH$-homotopy representation $X'(\cF')$ that is constructed from $X(\cF)^H$ by attaching $WH$-free cells and a $WH$-map \[\varphi: X'(\cF')\xrightarrow{} X''(\cF')\] of degree $k$.
    
Then we use \cref{prop:4} applied to $f''\varphi$ to get a $WH$-map \[f':X'(\mathcal{F}')\xrightarrow{} Y^H\] such that $d(f')(H)=1$ and $f'^K\simeq_{WH}(f''\varphi)^K$ for all $K\supsetneq H$. 

Now, in either case, $X'(\cF')$ was constructed from $X(\cF)^H$ by attaching $WH$-free cells, so we can apply \cref{lem:3}. This gives us a unique $G$-CW-complex $X(\cF')$ that contains $X(\cF)\cup X'(\cF')$ and an extension $f_{\cF'}$ of $f_\cF$ and $f'$. 

We now verify that $X(\cF')$ and $f_{\cF'}$ satisfy i-iii from above. 
\begin{enumerate}[{i.}]
    \item The construction of $X(\cF')$ only added $H$-fixed points to $X(\cF)$, so \[\Iso(X(\cF'))=\Iso(X(\cF))\cup (H)=(\cF\cap\Iso(Y))\cup(H)=\cF'\cap\Iso(Y).\] 
    \item By \cref{rmk:2}, for every $K\in\cF$, \[X(\cF')^K=X(\cF)^K\text{ and } X(\cF')^H=X'(\cF').\] By assumption, $X(\cF)^K\simeq Y^K$ for $K\in\cF$ and since $d(f')(H)=\pm1$, we have a homotopy equivalence $X'(\cF')\simeq Y^H$.
    \item Also by \cref{rmk:2}, for every $K\in\cF$, \[f^K_{\cF'}=f^K_{\cF}\text{ and } f^H_{\cF'}=f'^H.\] So for every $K\in\cF'$, $\deg(f_{\cF'}^K)=\pm1$.
\end{enumerate} 

Note that at each step, when we pick $H\in \Iso(Y)\backslash \cF$, $\cF'$ is constructed to contain the entire conjugacy class of $H$. Therefore, by proceeding  through the finite set of conjugacy classes of isotropy subgroups of $Y$, we end up with a homotopy representation $X$ and a $G$-homotopy equivalence $f:X\xrightarrow{} Y$.
\end{proof}

We started this section with the question: when does there exist a homotopy representation $X$ that is $G$-homotopy equivalent to $Y$? This question is answered with the conditions on $Y$ given in \cref{prop:6}. In the next section we will see that, stably, every equivalence class of generalized homotopy representations contains a representative that satisfies the conditions of \cref{prop:6}.

\section{Proof of the Main Theorem}\label{Grothendieck}

The smash product $X\wedge Y$ of two (generalized) homotopy representations is a (generalized) homotopy representation. The set of $G$-homotopy classes of (generalized) homotopy representations is a commutative monoid under $\wedge$ with $S^0$ as the identity. Denote this monoid by $M(G)$ for homotopy representations and $M'(G)$ for generalized homotopy representations. Then denote the Grothendieck group of homotopy representations for $G$ by $V(G)$ and that for generalized homotopy representations by $V'(G)$. In this section, we will see that the natural map $\rho:V(G)\xrightarrow{} V'(G)$ is 
an isomorphism when $G$ is finite or stably free.

To begin we show that if we smash a generalized homotopy representation with an appropriate representation sphere then the resulting generalized homotopy representation satisfies conditions that allow us to apply \cref{prop:6}.

\begin{prop}\label{prop:7}
Let $G$ be a compact $k$-dimensional Lie group. Suppose $X$ is a generalized homotopy representation. 
Then there exists a finite-dimensional real representation $U$ such that the following hold:
\begin{enumerate}
    \item $\Iso(X\wedge S^U)$ is closed under intersections, and
    \item if $H\in \Iso(X\wedge S^U)$ and $H\subsetneq L$, then \[n(X\wedge S^U)(L)+k< n(X\wedge S^U)(H).\]
\end{enumerate}
\end{prop}
\begin{proof}
    Let $\{(H_i) : 1\leq i\leq m\}\subseteq \Psi(G)$ be the set of all conjugacy classes of isotropy groups of $X$. 
    
    Let $U$ be a finite-dimensional real representation such that each $H_i\in\Iso(U)$ and $\Iso(U)$ is closed under intersections. Such a $U$ exists by \cite[Lemma 2.4]{Bauer1988}. Furthermore, by the construction of $U$, if $H\in \Iso(U)$ and $L\supsetneq H$, then \[\dim(U^L)+k<\dim(U^H).\]    
    
    It follows that $\Iso(X\wedge S^U)$ is closed under intersections and if $H\in \Iso(U)$ and $L\supsetneq H$ then 
    \begin{align*}
        n(X\wedge S^U)(L)+k&=n(X)(L)+n(S^U)(L)+k\\
        &< n(X)(H)+n(S^U)(H)\\
        &= n(X\wedge S^U)(H).\qedhere
    \end{align*}
\end{proof}

\begin{remark}
    The end of the proof of \cite[Lemma 2.4]{Bauer1988} contains a typo which is clarified here. In order to show that $H\cap K$ is an isotropy group, 
    we consider the topological dimension of the union over all $L\supsetneq H\cap K$ of the sets $\{u\in U\mid G_u\in (L)\}$ and note that it is strictly less than $\dim(U^{H\cap K})$. 
    \end{remark}

We now verify that for any generalized homotopy representation $Y$ we can apply \cref{prop:7} to get a $V$ such that $Y\wedge S^V$ satisfies the conditions of \cref{prop:6}.

\begin{prop}\label{prop:8}
    Let $G$ be a compact $k$-dimensional Lie group that is finite or stably free.
    Let $Y$ be a generalized homotopy representation. There exists a representation $V$ of $G$ such that \begin{enumerate}
        \item $n(Y\wedge S^V)(H)\ge w_H+ 3$, for all closed $H\subseteq G$,
        \item if $H\in \Iso(Y\wedge S^V)$, then for all $L\supsetneq H$, \[n(Y\wedge S^V)(H)\ge n(Y\wedge S^V)(L)+w_H+1,\]
        \item $\Iso(Y\wedge S^V)$ is closed under intersections.
    \end{enumerate}
\end{prop}
\begin{proof}
    We begin by smashing $Y$ with $S^{k+3}$ so that \[n(Y\wedge S^{k+3})(G)\ge k+3.\] Let $W$ be the representation of \cref{prop:7} for the generalized homotopy representation $Y\wedge S^{k+3}$. We will show that for $V=\R^{k+3}\oplus W$, $Y\wedge S^V$ satisfies (1)-(3). 
    \begin{enumerate}
        \item  Recall that $m(H)$ is the minimal isotropy subgroup that contains $H$ and \[n(Y\wedge S^V)(H)=n(Y\wedge S^V)(m(H)).\] Since $k\ge w_H$ we have \[n(Y\wedge S^V)(m(H))\ge n(Y\wedge S^V)(G)\ge k+3\ge w_H+3.\] So, \[n(Y\wedge S^V)(H)\ge w_H+3.\]
        \item Let $H\in \Iso(Y\wedge S^V)$ and $L\supsetneq H$. By \cref{prop:7}(2), \begin{align*}
            n(Y\wedge S^V)(H)&\ge n(Y\wedge S^V)(L)+k+1\\
            &\ge n(Y\wedge S^V)(L)+w_H+1.
        \end{align*}
        \item This is \cref{prop:7}(1). \qedhere
    \end{enumerate}
\end{proof}

The next theorem follows directly from \cref{prop:8} and \cref{prop:6}. It will be essential in showing that the map $\rho$ is surjective when $G$ is finite or stably free.

\begin{thm}\label{thm:4}
    Let $G$ be finite or stably free.
    Given a generalized homotopy representation $Y$ and a representation $V$ as in \cref{prop:8}, there exists a homotopy representation $X$ such that \[X\simeq_G Y\wedge S^V.\]
\end{thm}

\begin{lem}
   Let $G$ be finite or stably free. The inclusion \[i:M(G)\xrightarrow{} M'(G)\] induces an injective homomorphism \[\rho:V(G)\xrightarrow{} V'(G).\] 
\end{lem}
\begin{proof}
    Recall that the Grothendieck groups are constructed as equivalence classes of pairs of elements in $M(G)$ (resp. $M'(G)$) with the equivalence relation that $(X_1,X_2)\sim(Y_1,Y_2)$ if there exists $Z$ such that \[X_1\wedge Y_2\wedge Z\simeq_G X_2\wedge Y_1\wedge Z.\] 

    Suppose $\rho(X_1,X_2)=(S^0,S^0)$. This implies that there exists a $Z\in M'(G)$ such that \[i(X_1)\wedge S^0\wedge Z\simeq_G i(X_2)\wedge S^0\wedge Z\]
    Since $i$ is injective this reduces to \[X_1\wedge Z\simeq_G X_2\wedge Z.\] By \cref{thm:4}, there exists a representation $V$ and a homotopy representation $Z_0$ such that $Z\wedge S^V\simeq_G Z_0$. Then \[X_1\wedge Z_0\simeq_G X_2\wedge Z_0,\] so $(X_1,X_2)=(S^0,S^0)$ in $V(G)$ and $\rho$ is injective.
\end{proof}

With the information that $\rho$ is an injective homomorphism we are almost done proving Conjecture 4.5 when $G$ is finite or stably free.

\begin{thm}\label{thm:3}
    Let $G$ be finite or stably free. 
    The homomorphism \[\rho:V(G)\xrightarrow{} V'(G)\] is an isomorphism.
\end{thm}

\begin{proof}
    We need only show that $\rho$ is surjective. Let $Y$ be a generalized homotopy representation. 
    By \cref{thm:4}, there exists a $G$-representation $V$ and a homotopy representation $X$ such that \[\rho([X])=[X]=[Y\wedge S^V].\] Then \[[X]-[S^V]= [Y\wedge S^V]-[S^V]=[Y]\] in $V'(G)$.
Thus we have that \[\rho([X]-[S^V])=[Y]\] and $\rho$ is surjective.
\end{proof}

This completes our proof that $V(G)$ is isomorphic to $V'(G)$ for $G$ finite or stably free. Since $V'(G)$ was already known to be isomorphic to $Pic(hSP^G)$ for all compact Lie groups, we now know that $V(G)\cong Pic(hSp^G)$ for $G$ finite or stably free. This allows us to fit the Picard group of the equivariant stable homotopy category for finite or stably free compact Lie groups into the following short exact sequence, \begin{equation}
    0\xrightarrow{} Pic(A(G))\xrightarrow{} Pic(hSp^G)\xrightarrow{} D(G)\xrightarrow{} 0.
\end{equation} In particular, recall that this means that we have such a short exact sequence for all of the compact Lie groups listed in \cref{orthogonal} and \cref{liegroups}.

\bibliographystyle{amsalpha}
\end{document}